\numberwithin{equation}{section}
\definecolor{citegreen}{rgb}{0,0.4,0}
\definecolor{refred}{rgb}{0.5,0,0}
\theoremstyle{plain}
\newtheorem {theorem}{Theorem}[section]
\newtheorem {lemma}[theorem]{Lemma}
\newtheorem {proposition} [theorem]{Proposition}
\newtheorem {corollary} [theorem]{Corollary}
\newtheorem{definition}[theorem]{Definition}
\theoremstyle{remark}
\newtheorem{remark}[theorem]{Remark}
\DeclarePairedDelimiter\abs{\lvert}{\rvert}
\newcommand{\R}{\mathbb R}
\newcommand{\N}{\mathbb N}
\renewcommand{\H}{\mathbb H}
\renewcommand{\theta}{\vartheta}
\newcommand{\barint}
{\rule[.036in]{.12in}{.009in}\kern-.16in \displaystyle\int}
\newcommand{\numberset}{\mathbb}
\renewcommand{\N}{\numberset{N}}
\renewcommand{\R}{\numberset{R}}
\newcommand{\Heis}{\numberset{H}}
\newcommand{\plap}{\Delta^{\!\Heis^n}_p}
\renewcommand{\phi}{\varphi}
\renewcommand{\epsilon}{\varepsilon}
\title[Strict starshapedness of solutions to the horizontal p-Laplacian]
{Strict starshapedness of solutions to the horizontal p-Laplacian in the Heisenberg group}
\dedicatory{Dedicated to Alberto Farina on the occasion of his 50th birthday}
\author[M.~Fogagnolo]{Mattia Fogagnolo}
\address{M.~Fogagnolo, Centro di Ricerca Matematica Ennio De Giorgi, Scuola Normale Superiore,
Piazza dei Cavalieri 3, 56126 Pisa (PI), Italy}
\email{mattia.fogagnolo@sns.it}
\author[A.~Pinamonti]{Andrea Pinamonti}
\address{A.~Pinamonti, Universit\`a degli Studi di Trento,
via Sommarive 14, 38123 Povo (TN), Italy}
\email{andrea.pinamonti@unitn.it}
\begin{document}

\begin{abstract}
We examine the geometry of the level sets of particular horizontally $p$-harmonic functions in the Heisenberg group. We find sharp, natural geometric conditions ensuring  that the level sets of the $p$-capacitary potential of a bounded annulus are strictly starshaped.
\end{abstract}

\maketitle

\section{Introduction}
The study of the geometric properties of the level sets of solutions to elliptic or parabolic boundary value problems is a classical but still very fertile field of research. Let us focus, without aiming to be complete, on the works most deeply linked with the object of the present paper. Starting in the classical ambient $\R^n$, consider bounded open sets $\Omega_1 \subset \Omega_2$ and let $u$ be a $p$-harmonic function in $\Omega_2 \setminus   \overline{\Omega}_1$ attaining in some sense the value $1$ on $\partial \Omega_1$ and $0$ on $\partial \Omega_2$. It is then quite natural to ask whether some geometric properties such as convexity or starshapedness of $\Omega_2$ and $\Omega_1$ are preserved by (the superlevel sets of) $u$. To the authors' knowledge, a first answer in the much easier case of $p=2$, and in the space $\R^3$ dates back to the 30s, when in \cite{gergen-starshaped} it was showed that the super level sets of $u$ are starshaped if $\Omega_1$ and $\Omega_2$ are. Later, in \cite{gabriel-convexity1, gabriel-convexity2, gabriel-convexity3} it was substantially shown that the same phenomenon occurs for the convexity issue. The nonlinear case was arguably first considered in \cite{pfaltzgraff1967}, where it was shown that the $p$-capacitary potentials in starshaped rings are starshaped. It is important to point out that the proof provided in such paper relies on a suitable symmetrization technique, that despite being powerful enough to treat very general equations \cite{salani-starshaped}, does not seem to provide the \emph{strict starshapedness} (a notion to be described in a while) of $u$ from that of $\Omega_1$ and $\Omega_2$. We close this historical excursus mentioning the fundamental \cite{lewis}, where, in addition to the challenging extension of the aforementioned convexity results to the nonlinear setting, the author fine-tunes the maximum principle techniques we are adopting in the present work.

Before venturing in a description of our main result, let us observe that the kind of issue we just briefly discussed has recently gained attention also in the context of sub-Riemannian geometries, that is actually the setting for this article. Indeed, in \cite{danielligarofalo-starshaped} it was shown that in the linear situation $p = 2$ starshapedness  of $\Omega_1$ and $\Omega_2$ is not only preserved by $u$, but actually improved to strict starshapedness. The nonlinear generalization appeared in \cite{dragoni-starshaped}. However, as in the standard Euclidean situation, if $p \neq 2$ it is not clear whether strict starshapedness is preserved.  A $C^1$-domain containing the origin is said to be starshaped (with respect to the origin) in the Heisenberg geometry if its outer unit normal $\nu$ satisfies $\langle \nu , Z \rangle \geq 0$, while it is strictly starshaped if such inequality is strict. 
We are denoting by $Z$ the dilation-generating vector field. 
To fix the ideas, we may think of $Z$ as the natural replacement  for the Euclidean position vector in the classical notion of starshapedness. 


The aim of the present paper is to explore the issue of strict starshapedness for the superlevel sets of $p$-capacitary potentials in the Heisenberg group $\Heis^n$. Leaving the definitions and a brief introduction  to the Heisenberg groups to the next section, let $\Omega_1 \Subset \Omega_2 \Subset \Heis^n$ be $C^1$ domains containing the origin, and consider, for $p > 1$, the solution $u$ to
\begin{equation}
\label{pb-bounded}
\begin{cases}
\,\,\Delta^{\!\Heis^n}_p{u}= 0 & \mbox{in} \,\, \Omega_2\setminus\overline\Omega_1 
\\
\,\,\,\,\,\,\,\,\,\,\,\,\,u=1 & \mbox{on}\,\, \overline\Omega_1 
\\
\,\,\,\,\,\,\,\,\,\,\,\,\,u = 0 &\mbox{on} \,\, \partial\Omega_2,
\end{cases}
\end{equation}
where by $\Delta^{\!\Heis^n}_p$ we indicate the \emph{horizontal} $p$-Laplacian, that is the natural analogue in $\Heis^n$ of the classical $p$-Laplacian in $\R^n$. Our main result substantially establishes that the \emph{strict} starshapedness of $\Omega_1$ and $\Omega_2$ is preserved by $u$ . 
\begin{theorem}
\label{th-starshaped-bounded}
Let $u$ be a $C^1$-weak solution  to \eqref{pb-bounded} with $p > 1$ for $\Omega_1 \subset \Heis^n$ and $\Omega_2 \subset \Heis^n$ bounded sets with $C^1$ boundaries that are \emph{strictly starshaped} with respect to  the origin $O \in \Heis^n$ and such that $\Omega_1 \Subset \Omega_2$. Assume also that $\Omega_1$ satisfies an \emph{uniform exterior gauge ball condition} and $\Omega_2$ satisfies an \emph{uniform interior gauge ball condition}.
Then, $\{u \geq t\} \cup \overline{\Omega}_1$ is a bounded set with $C^1$-boundary that is strictly starshaped with respect to $O$ for any $t \in [0, 1]$. Moreover, $\abs{\nabla u} \neq 0$ in ${\Omega_2} \setminus \overline{\Omega}_1$. 
\end{theorem}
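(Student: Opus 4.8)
The plan is to run a continuity/maximum-principle argument in the spirit of Lewis and of Danielli--Garofalo, adapted to the sub-Riemannian setting. Introduce the auxiliary function $w = \langle \nabla u, Z \rangle$ (suitably interpreted through the horizontal gradient together with the commutator structure, so that $w$ encodes the derivative of $u$ along the dilation flow $\delta_s$). The first step is to show that $w$ satisfies, at least formally where $\nabla u \neq 0$, a linearized equation $\mathcal L_u w = 0$, where $\mathcal L_u$ is the linearization of $\Delta^{\Heis^n}_p$ at $u$; this is a second-order, possibly degenerate, elliptic operator in the region where $|\nabla u| > 0$. The key computation is that the $p$-Laplacian is equivariant under the anisotropic dilations of $\Heis^n$ up to a scaling factor, so that $u \circ \delta_s$ is again $p$-harmonic, and differentiating in $s$ at $s = 0$ kills the zero-order term and yields the homogeneous linearized equation for $w$. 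I expect this equivariance bookkeeping — keeping track of how $Z$ interacts with the horizontal frame $X_1, \dots, X_{2n}$ and the vertical direction $T$ — to be the first place where care is needed.

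The second step is boundary analysis. On $\partial \Omega_2$ and $\partial \Omega_1$, strict starshapedness says $\langle \nu, Z \rangle > 0$ with $\nu$ the outer unit normal. Because $u = 0$ on $\partial\Omega_2$ and $u=1$ on $\partial\Omega_1$, and $u$ increases from $0$ to $1$ going inward, Hopf-type boundary-point estimates (valid once we know $|\nabla u| \neq 0$ near the boundary, which the regularity hypotheses and the gauge-ball conditions are designed to give) force $\nabla u$ to be a negative multiple of $\nu$ on $\partial\Omega_2$ and a positive multiple of $\nu$ on $\partial\Omega_1$ — in both cases pointing so that $w = \langle \nabla u, Z\rangle$ has a strict sign, say $w > 0$, on the whole boundary of the ring. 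The uniform exterior/interior gauge ball conditions enter exactly here: they guarantee the existence of barriers (comparison with the explicit $p$-capacitary potentials of gauge balls) that yield quantitative non-degeneracy $|\nabla u| \geq c > 0$ near $\partial\Omega_1 \cup \partial\Omega_2$, hence the applicability of Hopf's lemma and the $C^1$ smoothness of the level sets there.

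The third step is the global conclusion by a connectedness/continuity argument. Let $A = \{ x \in \Omega_2 \setminus \overline\Omega_1 : |\nabla u(x)| \neq 0\}$; by the previous step $A$ contains a neighborhood of $\partial\Omega_1 \cup \partial\Omega_2$. On $A$, $w$ solves the homogeneous linearized equation and is $>0$ on $\partial A \cap (\partial\Omega_1 \cup \partial\Omega_2)$, so by the strong maximum principle $w > 0$ throughout the connected component of $A$ touching the boundary. One then shows $A = \Omega_2 \setminus \overline\Omega_1$: if not, the level sets would fail to be graphs over the gauge sphere and $u$ would have interior critical points, but $w > 0$ on the relevant component means $s \mapsto u(\delta_s x)$ is strictly monotone along dilation rays there, which (by a foliation argument) both forces the superlevel sets to be strictly starshaped $C^1$ graphs and propagates non-vanishing of the gradient inward, closing the gap. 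I expect the main obstacle to be precisely this last step: ensuring the linearized equation is non-degenerate enough (or can be regularized, e.g. via the $p$-Laplacian approximated by $\epsilon$-regularized operators) for the strong maximum principle to apply across the whole ring, and ruling out that the critical set $\{\nabla u = 0\}$ disconnects $A$ — this is the sub-Riemannian analogue of the classical difficulty that for $p \neq 2$ one has no a priori real-analyticity and must work harder to exclude interior critical points. Once $|\nabla u| \neq 0$ is established on all of $\Omega_2 \setminus \overline\Omega_1$ and $w > 0$ there, strict starshapedness of $\{u \geq t\} \cup \overline\Omega_1$ and its $C^1$ regularity for every $t \in [0,1]$ follow from the implicit function theorem applied to $u$ along dilation rays.
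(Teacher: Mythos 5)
There is a genuine gap, and it sits exactly where you flag ``the main obstacle'': the strategy of differentiating the equation to get a linearized operator $\mathcal L_u$ for $w=\langle\nabla u,Z\rangle$ and then running a strong maximum principle cannot be closed with the available technology. First, for $p\neq 2$ one needs (at least) $W^{2,2}_{\rm loc}$ or $C^2$ regularity of $u$ to make sense of $\mathcal L_u w=0$, and for the horizontal $p$-Laplacian in $\Heis^n$ even classical $C^1$ regularity of weak solutions is open for general $p>1$ (the paper assumes it as a hypothesis precisely for this reason). Second, $\mathcal L_u$ degenerates wherever the \emph{horizontal} gradient $\nabla_{\Heis^n}u$ vanishes, not merely where the full gradient does --- and the horizontal gradient genuinely vanishes at characteristic points even in the model case of concentric gauge balls, so the set where your linearized equation is uniformly elliptic need not contain a full neighbourhood of $\partial\Omega_1\cup\partial\Omega_2$, let alone the whole ring. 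Third, the final step is circular: you need $w>0$ on all of $\Omega_2\setminus\overline\Omega_1$ to rule out interior critical points, but you can only propagate $w>0$ on the component of the non-critical set touching the boundary, and nothing prevents the critical set from disconnecting the annulus. No foliation argument is supplied that breaks this circle.

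The paper's proof avoids differentiating the equation altogether. It works with finite difference quotients along the dilation flow: after using the explicit gauge-ball capacitary potentials \eqref{barrier-function} as barriers (together with Proposition \ref{entrata-palla}, which converts strict starshapedness plus the tangent-ball condition into the quantitative statement that $\delta_\lambda(z)$ enters the tangent ball), one obtains the uniform boundary estimates \eqref{barrier-applied1}--\eqref{barrier-applied2} for $\bigl(u(\delta_{\pm\lambda}(z))-u(z)\bigr)/\lambda$. Then, since $u_\lambda=u\circ\delta_\lambda$ is again $p$-harmonic (Lemma \ref{dilation-invariance}), the \emph{nonlinear} Comparison Principle applied to $u_\lambda/\lambda$ and $u/\lambda-LCK/2$ on the dilated annulus gives the global bound \eqref{quasi-fine} on the difference quotient at \emph{every} point, with no a priori knowledge of where $\nabla u$ vanishes; letting $\lambda\to0^+$ yields $\langle\nabla u,Z\rangle\le -LCK/2$ wherever $\nabla u$ exists. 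This sliding-and-comparison scheme (Lewis's Lemma 2 adapted to $\Heis^n$) is the key idea your proposal is missing: it replaces the linearized maximum principle, which is unavailable here, by a comparison between $u$ and its own dilations.
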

We are actually going to fully prove Theorem \ref{th-starshaped-bounded} for $p \neq Q$, with $Q = 2n + 2$ since, with the techniques adopted, the modifications needed to cover the case $p = Q$ are straightforward, and illustrated in Remark \ref{p=Q} below.

The above result will follow from a somewhat more general principle, Theorem \ref{thm-prestarshaped} below, asserting that, if $\Omega_1$ and $\Omega_2$ satisfy the above conditions, then at any point where the classical gradient of the solution $u$ to \eqref{pb-bounded} exists we have $\langle \nabla u, Z \rangle $ uniformly  bounded away from zero. Theorem \ref{th-starshaped-bounded} becomes then an immediate corollary of such statement. To the authors' knowledge, it is not clear whether weak solutions to \eqref{pb-bounded} actually do enjoy classical $C^1$-regularity for a general $p> 1$. On the other hand, it has been established in \cite{domokos-manfredi} for $p$ belonging to a neighbourhood of $2$. It has been moreover recently discovered in \cite{shirsho-zhong} that at least the \emph{horizontal gradient} is H\"older continuous for any $p > 1$. 

\smallskip

It is worth pointing out that our result does not allow to conclude that the \emph{horizontal} gradient of $u$ does not vanish. This is actually sharp, since in the explicit, symmetric situation where $\Omega_1$ and $\Omega_2$ are two concentric \emph{gauge balls}, i.e. defined with respect to the well known Koranyi norm of $\Heis^n$, the level sets of $u$ remain gauge balls, that in particular display a characteristic point where the horizontal gradient vanishes, while the vertical derivative does not. This is also the reason why we cannot, with the available technology, infer higher regularity of $u$ and its level sets from the conclusion of Theorem \ref{th-starshaped-bounded}. Indeed, it is known from the arguments in \cite{ricciotti-book} that the nonvanishing of the horizontal gradient implies the smoothness of $u$, but it is not known whether the nonvanishing of the vertical derivative alone suffices to this aim.

All in all, the issue of regularity for horizontally $p$-harmonic functions in Heisenberg groups has been and still is a fervid field of research, and we think our work could serve as an additional motivation to carry on with that. In addition to the aforementioned contributions about this topic, we cite the papers \cite{mingione-zhong, manfredi-mingione, capogna-regularity, ricciotti-regularity}. Let us recall briefly that the optimal $C^{1, \alpha}$-regularity of standard $p$-harmonic functions is well known, and established independently in \cite{dibenedetto-p-regularity, tolksdorf-regularity}.

Let us now pass to discuss the last main assumption involved in Theorem \ref{th-starshaped-bounded},  
namely the tangent gauge ball condition we ask $\Omega_1$ and $\Omega_2$ to be subject to.  Such property constitutes clearly a natural analogue of the round tangent ball conditions in the Riemannian geometry, that is indeed heavily used to deal with barrier arguments, as those performed here. On the other hand, differently from classical situations, a tangent gauge ball to the boundary of a set $\Omega \subset \Heis^n$ is not ensured \emph{no matter the regularity of} $\partial \Omega$. This is explicitly shown in \cite{martino-tralli}.

\bigskip

The proof of Theorem \ref{th-starshaped-bounded} is inspired by the barrier argument used in the proof \cite[Lemma 2]{lewis}. Indeed, such result can be substantially rephrased as the standard Euclidean version of Theorem \ref{th-starshaped-bounded}. On the other hand, we emphasize that Lewis' paper deals with convexity assumptions, and thus this is, to our knowledge, the first place where the strict starshapedness is observed to be preserved even in the classical context.
Let us point out that this type of argument has been also finely reworked in the Appendix of \cite{bianchini-ciraolo} to deal with the anisotropic $p$-Laplacian.
Closing these brief comments on the proof, we observe that these ideas, at least from an heuristic point of view, seem to be exportable to more general Carnot groups. On the other hand, as often occurs, technical challenges could arise when dealing with such a generalization. 
We could come back on this topic in future works.

\medskip

Let us comment on possible geometric applications and perspectives of results such as Theorem \ref{th-starshaped-bounded}. Problem \eqref{pb-bounded} for the standard $p$-Laplacian, with $\Omega_2$ dilated away at infinity, has been recently utilized in \cite{Fog_Maz_Pin} and \cite{Ago_Fog_Maz_2} as a substitute for the Inverse Mean Curvature Flow \cite{Hui_Ilm, moser-jems} to infer geometric and analytical inequalities for hypersurfaces in $\R^n$. In the earlier \cite{Fog_Maz_Pin} the nonvanishing of $\abs{\nabla u}$ was assumed in order to establish suitable monotonicity formulas. In particular, by the aforementioned \cite[Lemma 2]{lewis} , the results of \cite{Fog_Maz_Pin} hold true for strictly starshaped domains. Theorem \ref{th-starshaped-bounded} can thus be read as a first indication about the viability of these techniques in the sub-Riemannian setting. More generally, the preserving of starshapedness along suitable evolutions of hypersurfaces is a highly desirable and thoroughly studied property in Geometric Analysis, let us cite for the mere sake of example \cite{gerhardt, urbas, Brendle, pipoli-complex}.

\bigskip

The present paper is structured as follows. In Section \ref{heisenberg-sec} we recall and discuss the preparatory material we are going to need for the proof of Theorem \ref{th-starshaped-bounded}. More precisely, we review definitions and basic properties of the Heisenberg group, we discuss hypersurfaces that fulfil starshapedness and tangent gauge balls conditions, and recall some fundamental facts about horizontally $p$-harmonic functions.
In Section \ref{proof-sec} we work out the proof of Theorem \ref{th-starshaped-bounded}, that as already mentioned will follow from the slightly more general Theorem \ref{thm-prestarshaped}.

\section{The Heisenberg group and horizontally $p$-harmonic functions}
\label{heisenberg-sec}
We summarize below some properties of the Heisenberg group that we will need throughout the paper. We follow here the presentation given in \cite{monti-dispensa} and we address the interested reader to \cite{BLU} for a complete overview.
\subsection{The Heisenberg group}
Let $n \geq 1$. We denote by $\Heis^n$ the Lie group $(\R^{2n + 1}, \cdot)$, where the group product between $z = (x_1, \dots, x_n, y_1, \dots, y_n, t)$ and $\tilde{z} =  (\tilde{x}_1, \dots, \tilde{x}_n, \tilde{y}_1, \dots, \tilde{y}_n, \tilde{t})$ is defined by
\begin{equation}
\label{product}
z \cdot \tilde{z} = \left(x_1 + \tilde{x}_1, \dots, x_n + \tilde{x}_n, y_1 + \tilde{y}_1, \dots, y_n + \tilde{y}_n, t + \tilde{t} - 2 \left(\sum_{i =1}^n x_i \tilde{y}_i + \tilde{x}_i y_i\right) \right).
\end{equation}
We denote by $z^{-1}$ the inverse of $z \in \Heis^n$ with respect to the group law defined above.
The Lie algebra $\mathfrak{g}$ of left invariant vector fields of $\Heis^n$ is spanned by the vector fields
\begin{equation}
\label{lie}
X_i = \frac{\partial}{\partial x_i} + 2y_i \frac{\partial}{\partial t}, \quad Y_j = \frac{\partial}{\partial y_j} - 2x_j\frac{\partial}{\partial t}, \quad T = \frac{\partial}{\partial t} \qquad i, j = 1, \dots, n. 
\end{equation}
It is easy to see that the Lie algebra of $\Heis^n$ is stratified of step $2$, i.e. denoting by 
\begin{align}
    V_1 = \mathrm{span}\{X_i, Y_i,\ i=1,\ldots, n\},\qquad V_2 = \mathrm{span}\{T\}
\end{align}
it holds
\begin{align}
    \mathfrak{g}=V_1\oplus V_2
\end{align}
and $[V_1,V_1]=V_2$, where $[V_1,V_1]=\mathrm{span}\{[v,w],v,w\in V_1\}$.
As usual we refer to $V_1$ as the $\emph{horizontal layer}$ and to $X_i$ and $Y_j$ as \emph{horizontal vector fields}. 

Given an open subset $U \subseteq \Heis^n$, and a function $f \in C^1(U)$, we denote by $\nabla_{\Heis^n} f$ the horizontal gradient of $f$ defined as
\begin{equation}
\label{horizontal-gradient-true}
\nabla_{\mathbb{H}} f= \left(X_1 f, \dots, X_n f, Y_1 f, \dots, Y_n f\right)
\end{equation}
while we denote with $\nabla$ the classical gradient in $\R^{2n + 1}$. 

Given any two vector fields $Z$ and $W$ on $\Heis^n$ we are going to consider its classical scalar product in $\R^{2n + 1}$ and to denote it by $\langle Z, W \rangle$.
Similarly, we indicate the  $\R^{2n + 1}$-norm of $Z$ simply by $\abs{Z}$.
 
Various distances are usually considered in relation with Heisenberg groups, but we limit ourselves to the one induced by the Koranyi homogeneous norm. Given $z, w \in \Heis^n$ with coordinates as above, we define the Koranyi homogeneous norm as follows
\begin{equation}
\label{gauge1}
\rho(z)  =  \rho(x_1,\ldots, x_n, y_1,\ldots, y_n, t) = \left[\left(\sum_{i= 1}^n x_i^2 + y_i^2\right)^2 + t^2\right]^{\frac{1}{4}} 
\end{equation} 
and then the Koranyi, or \emph{gauge}, distance between $z$ and $w$ is simply $\rho(w^{-1} \cdot z)$.
Accordingly, we denote by $B_\rho (z, R)$, for $R > 0$,  the open ball with respect to such metric, that is
\begin{equation}
\label{gauge-ball}
B_\rho (z, R) = \{w \in \Heis^n \, \vert \, \rho (w^{-1} z) < R\}.
\end{equation}
We refer to $B_\rho(z, R)$ as the \emph{gauge ball} of center $z$ and radius $R$.\\

The proof of our main result, i.e. Theorem \ref{th-starshaped-bounded}, requires suitable dilations of subsets and functions. To this end, let us recall the dilation $\delta_\lambda : \Heis^n \to \Heis^n$ of parameter $\lambda\in\mathbb{R}$ defined by
\begin{equation}
\label{dilations}
\delta_\lambda (x_1,, \dots, x_n, y_1, \dots, y_n, t) = (e^\lambda x_1, \dots, e^\lambda x_n, e^\lambda y_1, \dots, e^\lambda, y_n, e^{2\lambda}t).
\end{equation}
With such definition at hand, we introduce the following notations for dilated subsets and dilated functions.
For a subset $A \subset \Heis^n$ and $\lambda \in \R$, we define its dilation as
\begin{equation}
\label{dilation-subset}
A^\lambda \, = \, \delta_\lambda (A) \, = \, \{\delta_\lambda (z) \quad \vert  \, \, z \in A\}.
\end{equation}
Moreover, for a function $f :A \to \R$, we define its dilated $f_\lambda : A^{-\lambda} \to \R$ by 
\begin{equation}
\label{dilation-function}
f_\lambda (z) = f(\delta_\lambda (z)).
\end{equation}


\subsection{Starshaped sets and gauge ball properties}
The following is the definition we adopt for starshaped sets in the Heisenberg group, well posed for sets with $C^1$-boundary. We address the reader to \cite{dragoni-starshaped1} and \cite{dragoni-starshaped} for more extensive discussions on this geometric property in the more general context of Carnot groups and for some equivalent definitions.
\begin{definition}[Starshaped and strictly starshaped sets in Heisenberg groups]
\label{starshaped}
An open bounded set $\Omega \subset \Heis^n$ with $C^1$ boundary is \emph{starshaped} with respect to the origin if it contains the origin and
\begin{equation}
\label{starshaped-condition}
\left\langle \nu, Z\right\rangle (z) \geq 0
\end{equation}
for any $z \in \partial \Omega$, where $\nu$ is the Euclidean exterior unit normal to $\partial \Omega$, and 
\[
Z = (x_1, \dots, x_n, y_1, \dots, y_n, 2t)
\] 
is the dilation-generating vector field. The set $\Omega$ is called \emph{strictly starshaped} if inequality \eqref{starshaped-condition} holds with strict sign at any $z \in \partial \Omega$.
\end{definition}
It is worth observing the well-known relation between the vector field $Z$ appearing above and the geometry of $\Heis^n$. As the name of dilation-generating vector field suggests, the flow of $Z$ is given by the dilation map $\Heis^n \times \R \to \Heis^n$ given by $(z, \lambda) \to \delta_\lambda(z)$. In other words, we have
\begin{equation}
\label{flow-z}
\frac{d}{d \lambda} \delta_\lambda (z) = Z(\delta_\lambda (z)) , \qquad \delta_0(z)=z
\end{equation}
for any $z \in \Heis^n, \lambda \in \R$.

We now recall the definition of boundaries satisfying an interior or exterior gauge-ball condition. 
\begin{definition}[Interior and exterior gauge-ball property]
\label{gauge-ball-def}
Let $\Omega \subset \Heis^n$ be an open bounded set. Then, we say that $\Omega$ satisfies the \emph{exterior gauge ball property} at a point $z \in \partial\Omega$ if there exist $z_1 \in \Heis^n \setminus \overline{\Omega}$  and $R_1 > 0$ such that $B_\rho(z_1, R) \subset \Heis^n \setminus \overline{\Omega}$ and $\overline{B}_\rho (z_1, R) \cap \partial \Omega = \{z\}$. We say that $\Omega$ satisfies a \emph{uniform} exterior gauge ball property if $R$ is uniform for $z \in \partial \Omega$. 

Similarly we say that $\Omega$ satisfies the \emph{interior ball condition} at a point $z \in \partial \Omega$ if there exist  $z_2 \in \Omega$  and $R_2 > 0$ such that $B_\rho(z_2,R_2) \subset \Omega$ and $\overline{B}_\rho (z_2, R_2) \cap \partial \Omega = \{z\}$. Again, we say that $\Omega$ satisfies a \emph{uniform} interior gauge ball property if $R$ is uniform for $z \in \partial \Omega$.  
\end{definition}
It is worth pointing out that, strikingly differently from the Euclidean case, one can easily find in $\Heis^n$ sets with smooth boundary not satisfying exterior or interior gauge ball conditions. This is ultimately due to the lack of strict, uniform Euclidean convexity of gauge balls, that is, their boundaries display points where some principal curvature (with respect to the flat metric of $\R^{2n + 1}$) vanishes. An explicit example of domains with smooth boundary not satisfying gauge ball conditions is shown in \cite{martino-tralli}. Indeed,  in such paper it is shown, precisely in \cite[Proposition 2.4]{martino-tralli}, that an interior gauge ball condition at a point $z \in \partial \Omega$, for some  open set $\Omega$, suffices to prove a Hopf boundary point lemma for harmonic functions at $z$. On the other hand, the authors provide in \cite[Counterexample 2.3]{martino-tralli} a set with paraboloidal boundary in $\Heis^1$ with the Hopf property failing on the vertex, that in particular does not admit a gauge ball touching from the inside. 

It is also important to remark that while it is very easy to find sets with characteristic boundary points satisfying gauge ball condition (gauge balls themselves provide such examples), non-characteristic points of $C^{1, 1}$ boundaries enjoy exterior and interior touching ball condition. This is shown in \cite[Theorem 8.4]{garofalo-phuc}. In particular, the conditions of Definition \ref{gauge-ball-def} are strictly weaker than being non-characteristic. Finally, we address the interested reader to \cite{Capogna,Jerison1,Jerison2,Lanc} and references therein for some discussions on the importance of the gauge-ball property from the regularity standpoint.
 
In the proof of Theorem \ref{th-starshaped-bounded}, we are using the following natural property of (strictly) starshaped sets. It substantially consists in a refinement, holding true under the additional assumptions of exterior or interior gauge ball conditions, of similar properties described in the more general context of Carnot groups in \cite[Proposition 4.2]{dragoni-starshaped}.
\begin{proposition}
\label{entrata-palla}
Let $\Omega \subset \Heis^n$ be a strictly starshaped open bounded set with $C^1$-boundary. Assume that $\Omega$  satisfies the exterior gauge ball condition at $z \in \partial \Omega$, and let $B_\rho (z_1, R)$ the exterior tangent gauge ball at $z$. Then, there exists $\overline{\lambda} > 0$ such that for any $0 < \lambda \leq \overline{\lambda}$ we have $\delta_\lambda (z) \in B_\rho(z_1, R)$.

Analogously if $\Omega$ admits an interior tangent gauge ball $B(z_2, R)$ at $z \in \partial \Omega$, then there exists $\overline{\lambda} > 0$ such that any $-\overline{\lambda} < - \lambda < 0$ we have $\delta_{-\lambda} (z) \in B_\rho(z_2, R)$.
In both cases, the constant $\overline{\lambda}$ depends continuously on $R$, on the center of the ball and on the point of tangency.
\end{proposition}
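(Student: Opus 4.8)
The plan is to reduce the statement to a one--variable computation along the dilation orbit $\lambda\mapsto\delta_\lambda(z)$ of the tangency point. Treat the exterior case first; fix the exterior tangent gauge ball $B_\rho(z_1,R)$ at $z$ and set $g(w):=\rho(z_1^{-1}\cdot w)$, so that $B_\rho(z_1,R)=\{g<R\}$ and $g(z)=R$. First I would record the regularity of $g$: since $\rho^4$ is a polynomial whose Euclidean gradient vanishes only at the origin, $\rho$ is smooth and $\nabla\rho\neq 0$ on $\Heis^n\setminus\{O\}$; composing with the left translation $w\mapsto z_1^{-1}\cdot w$, which by \eqref{product} is an affine bijection of $\R^{2n+1}$ mapping $z_1$ to $O$, shows that $g$ is smooth on $\Heis^n\setminus\{z_1\}$ with $\nabla g(w)\neq 0$ for every $w\neq z_1$. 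Note that $z\neq z_1$, because $z_1\in\Heis^n\setminus\overline\Omega$ while $z\in\partial\Omega$.

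The next step is to identify the Euclidean exterior unit normal $\nu(z)$ of $\partial\Omega$. Since $B_\rho(z_1,R)\subset\Heis^n\setminus\overline\Omega$, one has $\overline\Omega\subset\{g\ge R\}$, and because $z\in\partial\Omega$ with $g(z)=R$, the smooth function $g$ attains its minimum over $\overline\Omega$ at the boundary point $z$. The first--order optimality condition at $z$, together with $\nabla g(z)\neq 0$, forces $\nu(z)=-\nabla g(z)/\abs{\nabla g(z)}$. In the interior case one sets instead $g(w):=\rho(z_2^{-1}\cdot w)$, and from $B_\rho(z_2,R)\subset\Omega$ gets $\Heis^n\setminus\Omega\subset\{g\ge R\}$, so $g$ has a boundary minimum over $\Heis^n\setminus\Omega$ at $z$; the same argument now yields $\nu(z)=+\nabla g(z)/\abs{\nabla g(z)}$.

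Now consider $\gamma(\lambda):=\delta_\lambda(z)$. By \eqref{flow-z} we have $\gamma(0)=z$ and $\gamma'(0)=Z(z)$, and $\gamma$ is smooth; moreover $\gamma(\lambda)\neq z_1$ for $\lambda$ in a neighbourhood of $0$, so $\psi(\lambda):=g(\gamma(\lambda))$ is well defined and smooth near $0$, with $\psi(0)=R$ and, by the chain rule and the normal identification above,
\[
\psi'(0)=\langle\nabla g(z),Z(z)\rangle=-\abs{\nabla g(z)}\,\langle\nu(z),Z(z)\rangle<0,
\]
the strict inequality being exactly the strict starshapedness of $\Omega$ at $z$ (Definition \ref{starshaped}). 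Hence there is $\overline\lambda>0$ with $\psi(\lambda)<R$, i.e. $\delta_\lambda(z)\in B_\rho(z_1,R)$, for all $\lambda\in(0,\overline\lambda]$. The interior case is identical: there $\psi'(0)=\abs{\nabla g(z)}\langle\nu(z),Z(z)\rangle>0$, so $\psi(\lambda)<R$ for $\lambda\in[-\overline\lambda,0)$, that is $\delta_{-\lambda}(z)\in B_\rho(z_2,R)$ whenever $0<\lambda\le\overline\lambda$.

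For the final claim on continuous dependence, I would keep the construction of $\overline\lambda$ explicit: since $\psi'(\mu)=\langle\nabla g(\delta_\mu(z)),Z(\delta_\mu(z))\rangle$ is jointly smooth in $\mu$ and in the data $(R,z_1,z)$ (resp.\ $(R,z_2,z)$) on the region where $\delta_\mu(z)\neq z_1$ (resp.\ $\neq z_2$), one can take $\overline\lambda$ to be, say, the largest parameter for which $\psi'\le\tfrac12\psi'(0)$ on $[0,\overline\lambda]$ (resp.\ $[-\overline\lambda,0]$); this choice depends continuously on those quantities. The only point in the argument that is not completely soft is the identification of $\nu(z)$ through the boundary minimum of $g$ --- where the non--vanishing $\nabla g(z)\neq 0$ is essential --- so I expect that to be the main (though still mild) obstacle; the rest is routine ODE and chain--rule bookkeeping.
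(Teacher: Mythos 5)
Your proof is correct and follows essentially the same route as the paper: both arguments use the tangency to identify the Euclidean normal of $\partial\Omega$ at $z$ with (the opposite of) the normalized gradient of the gauge distance from the ball's center, and then observe that strict starshapedness, i.e.\ $\langle \nu, Z\rangle>0$, forces the dilation orbit $\lambda\mapsto\delta_\lambda(z)$, whose velocity is $Z$ by \eqref{flow-z}, to enter the ball. Your write-up is in fact more detailed than the paper's (which states the flow argument in two lines), supplying the non-vanishing of $\nabla\rho$ away from the origin and the first-order optimality condition at the tangency point; the continuity of $\overline\lambda$ is treated at the same informal level in both.
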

\begin{proof}
Let $B_\rho (z_1, R)$ be a gauge ball of center $z_1$ and radius $R$, and let $z \in\partial B_\rho (z_1, R)$. If $\left\langle \nu, Z\right\rangle > 0$, where $\nu$ is the interior normal to $B_\rho (z, R)$, as in the first case of the statement we are proving, then this means that $Z$ points towards the interior of the gauge ball, and thus, since $\delta_\lambda$ satisfies \eqref{flow-z}, we infer the existence of $\overline{\lambda} > 0$ such that $\delta_\lambda(z) \in B_\rho(z_1, R)$ for any $0 < \lambda < \overline{\lambda}$. From the continuity of the map $(z, \lambda) \to \delta_\lambda(z)$, following from \eqref{flow-z}, we also deduce that $\overline{\lambda}$ changes continuously with respect to the data. The statement about interior tangent balls is shown the very same way.
\end{proof}
\subsection{Preliminaries on $p$-harmonic functions in the Heisenberg group}
Let $1<p<\infty$, we denote the horizontal $p$-Laplacian with $\plap$. It acts on a $C^2$ function $f$ of $\Heis^n$ as
\begin{equation}
\label{smooth-plap}
\plap f = \sum_{i = 1}^n X_i \left(\abs{\nabla_{\H^n} f}^{p-2} X_i f\right) + Y_i \left(\abs{\nabla_{\H^n} f}^{p-2} Y_i f\right).
\end{equation}
Consequently, we say that a $C^2$-function $f:U\subset\H^n\to \mathbb{R}$ is \emph{horizontally $p$-harmonic} in an open set $U$ if $\plap f = 0$ in $U$.

\subsubsection*{Explicit solutions}
We immediately exhibit explicit horizontally $p$-harmonic functions, that will serve us both as model solutions to \eqref{pb-bounded} and to construct the barriers functions employed in the proof of Theorem \ref{th-starshaped-bounded}. 
We have that, if $p \neq Q$, for any $w \in \Heis^n$, the function 
\begin{equation}
\label{green}
v_w (z) = \rho^{-\frac{Q - p}{p-1}} (w^{-1} z)
\end{equation}
is horizontally $p$-harmonic for any $z \in \Heis^n \setminus \{w\}$. This was established in \cite[Theorem 2.1]{garofalo-explicit-green}, where the authors showed that, up to a normalizing constant, the function $G : \Heis^n \times \Heis^n \setminus \mathrm{Diag} \, {\Heis^n}$ defined by $G (z, w) = v_w(z)$ constitutes the fundamental solution for the horizontal $p$-Laplacian with singularity at $\mathrm{Diag} \, {\Heis^n}=\{(z,z)\in \Heis^n\times \Heis^n\}$.
In particular, as pointed out in the same paper, the (unique) solution to problem \eqref{pb-bounded} in the model situation where $\Omega_1 = B_\rho (O, r)$ and $\Omega_2 = B_\rho (O, R)$ for some $R > r$, is given by 
\begin{equation}
\label{model}
u_{r, R}(z) = \frac{\rho(z)^{-\frac{Q-p}{p-1}} - R^{-\frac{Q-p}{p-1}}}{r^{-\frac{Q-p}{p-1}} -R^{-\frac{Q-p}{p-1}}}
\end{equation}
 for any $p \neq Q$.
\begin{remark}
\label{p=Q}
In the case $p = Q$, the analogue of \eqref{green}, again according to \cite{garofalo-explicit-green}, is given by
\begin{equation}
\label{green-Q}
v_w(z) = \log \rho(w^{-1} \cdot z).
\end{equation}
In particular, the proof of the analogue of Theorem \ref{pb-bounded} in the case $p = Q$ is obtained simply by modelling the barrier functions employed in our proof on \eqref{green-Q} rather than on \eqref{model}. We address the interested reader to \cite{lewis-reg} for more details on how to handle this situation in the case of $\R^n$ with the standard notion of $p$-Laplacian.  
\end{remark}
Let us recall now some functions spaces suited to define the weak solutions to 
\eqref{smooth-plap}. We follow \cite{garofalo-harnack} and \cite{danielli-harnack}. We let, for an open subset $U \subseteq \Heis^n$, for $p \geq 1$, the \emph{horizontal $(1, p)$-Sobolev space} $HW^{1, p}(U)$ be defined as the metric completion of $C^1(U)$ in the norm
\begin{equation}
\label{hw1p-norm}
\vert\vert f \vert\vert_{HW^{1, p}(U)} = \int_U \abs{f}^p + \abs{\nabla_{\Heis^n} f}^p \, dz .
\end{equation}
Analogously, we define the space $HW^{1, p}_0 (U)$ as the metric completion of $C^1_0 (U)$ under the same norm.

We say that $f \in HW^{1, p}(U)$ is \emph{horizontally weakly $p$-harmonic} if
\begin{equation}
\label{weak}
\sum_{i = 1}^n \int_U  \left(\abs{\nabla_{\H^n} f}^{p-2} X_i f\right)X_i \phi +  \left(\abs{\nabla_{\H^n} f}^{p-2} Y_i f\right) Y_i \phi \,  \, d z = 0
\end{equation}
for any $\phi \in C^1_0 (U)$. 
From now on, we will frequently indicate horizontally weakly $p$-harmonic functions simply as $p$-harmonic, since no confusion can occur.

\smallskip

  By arguing exactly as in the Euclidean case, one recovers the fundamental Comparison Principle even for horizontally weakly $p$-harmonic functions in the Heisenberg group. We address the reader to \cite[Lemma 2.6]{danielli-harnack} for a statement in the more general context of quasilinear equations in Carnot groups. It actually holds also comparing subsolutions to supersolutions of the $p$-Laplacian, but being here concerned only with $p$-harmonic functions, we state it in the simplified version for solutions.
\begin{proposition}[Comparison Principle for $p$-harmonic functions]
\label{comparison}
Let $U \subset \Heis^n$ be an open set, and let $u, v \in HW^{1, p}(U)$ be $p$-harmonic functions. Then, if $\mathrm{min}(u - v, 0) \in HW^{1, p}_0 (U)$, then $u \geq v$ on the whole $U$.
\end{proposition}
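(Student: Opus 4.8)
The plan is to reproduce the classical $p$-Laplacian argument essentially word for word, the only inputs specific to $\Heis^n$ being the validity of the pointwise monotonicity inequality for the horizontal gradient (which takes values in $\R^{2n}$, so nothing changes) and of a horizontal Poincar\'e inequality on bounded domains. Set $w := \min(u-v,0) \in HW^{1,p}_0(U)$; the goal is to show $w \equiv 0$, which is exactly $u \geq v$.

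First I would argue that \eqref{weak} continues to hold for every test function $\phi \in HW^{1,p}_0(U)$, not merely for $\phi \in C^1_0(U)$. This is a routine density step: if $f$ is $p$-harmonic then $\nabla_{\H^n} f \in L^p(U)$, hence $|\nabla_{\H^n} f|^{p-2}\nabla_{\H^n} f \in L^{p/(p-1)}(U)$, so the bilinear pairing in \eqref{weak} is continuous with respect to $HW^{1,p}(U)$-convergence of $\phi$, and $C^1_0(U)$ is dense in $HW^{1,p}_0(U)$ by definition. Next I would plug $\phi = w$ into \eqref{weak} written first for $u$ and then for $v$, and subtract the two identities. Using the standard truncation fact $\nabla_{\H^n} w = (\nabla_{\H^n} u - \nabla_{\H^n} v)\,\chi_{\{u<v\}}$ almost everywhere (the Euclidean chain rule for Sobolev functions transfers verbatim to $HW^{1,p}$), the subtraction gives
\[
\int_{\{u<v\}} \big\langle |\nabla_{\H^n} u|^{p-2}\nabla_{\H^n} u - |\nabla_{\H^n} v|^{p-2}\nabla_{\H^n} v,\ \nabla_{\H^n} u - \nabla_{\H^n} v\big\rangle \, dz = 0.
\]

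By the elementary monotonicity inequality $\langle |a|^{p-2}a - |b|^{p-2}b,\, a-b\rangle \geq 0$ for $a,b \in \R^{2n}$, $p>1$, with equality only if $a=b$, the integrand is nonnegative and vanishes only where $\nabla_{\H^n} u = \nabla_{\H^n} v$. Hence $\nabla_{\H^n} u = \nabla_{\H^n} v$ a.e. on $\{u<v\}$, i.e. $\nabla_{\H^n} w = 0$ a.e. on $U$. Finally, from $w \in HW^{1,p}_0(U)$ with vanishing horizontal gradient I would conclude $w=0$: on a bounded open set this is immediate from the horizontal Poincar\'e inequality $\|w\|_{L^p(U)} \leq C\|\nabla_{\H^n} w\|_{L^p(U)}$ on $HW^{1,p}_0(U)$ (alternatively, a null horizontal gradient forces $w$ to be constant along horizontal curves, hence constant on connected components by Chow connectivity, and membership in $HW^{1,p}_0$ excludes a nonzero constant). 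Thus $\min(u-v,0)=0$ a.e., i.e. $u \geq v$ a.e. on $U$.

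The only mildly delicate points are the density step legitimizing the use of a Sobolev test function and the passage from a null horizontal gradient to a null function; neither is a genuine obstacle here, since $U$ is bounded in all our applications and both facts are by now standard in the Heisenberg setting (see \cite[Lemma 2.6]{danielli-harnack}).
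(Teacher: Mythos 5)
Your argument is correct and is precisely the standard Euclidean monotonicity proof that the paper itself invokes without writing out (it simply defers to \cite[Lemma 2.6]{danielli-harnack} and the remark that one argues ``exactly as in the Euclidean case''): test with $\min(u-v,0)$, subtract, use strict monotonicity of $\xi\mapsto|\xi|^{p-2}\xi$, and kill the resulting horizontal-gradient-free function via Poincar\'e or Chow connectivity. All the auxiliary facts you rely on (density of $C^1_0$ in $HW^{1,p}_0$, the truncation chain rule for horizontal gradients, the horizontal Poincar\'e inequality) are indeed available in the Heisenberg setting, so nothing is missing.
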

The above result roughly asserts that if two $p$-harmonic functions $u$ and $v$ satisfy $ u \geq v$ on the boundary of $U$ then the same inequality holds true in the interior on $U$. Actually, this is exactly what happens when boundary data are attained with some regularity.

\smallskip

Finally, let us recall that as an immediate consequence of the Harnack inequality for $p$-harmonic functions in Heisenberg groups \cite[Theorem 3.1]{garofalo-harnack} we get the following special form of a Strong Maximum/Minimum Principle, highlighted also in \cite[Theorem 2.5]{danielli-harnack}.
\begin{proposition}[Strong Maximum Principle for $p$-harmonic functions]
\label{strong}
Let $U \subset \Heis^n$ be an open bounded subset, and let $u \in HW^{1, p}(U)$ be $p$-harmonic. Then, $u$ cannot achieve neither its maximum nor its minimum in $U$.
\end{proposition}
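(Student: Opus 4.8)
The plan is to derive Proposition \ref{strong} from the Harnack inequality of \cite[Theorem 3.1]{garofalo-harnack}, following the classical strong-maximum-principle scheme. First I would make two preliminary reductions. Since the claim can be tested on each connected component of $U$ separately, I would assume without loss of generality that $U$ is connected. Moreover, although a priori $u$ is only an element of $HW^{1,p}(U)$, the local Hölder continuity of weak solutions of \eqref{weak} (itself a by-product of the Harnack estimate) allows one to work with a continuous representative of $u$, so that the notion of ``attaining the maximum in $U$'' is meaningful.

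Arguing by contradiction, I would then suppose that $u$ is non-constant and attains $M := \max_{\overline U} u$ (finite because $U$ is bounded) at some $z_0 \in U$, and set $v := M - u$. Inspecting \eqref{smooth-plap}--\eqref{weak}, one has $X_i v = -X_i u$, $Y_i v = -Y_i u$ and $\abs{\nabla_{\Heis^n} v} = \abs{\nabla_{\Heis^n} u}$, so that $v$ is again $p$-harmonic in $U$; moreover $v \geq 0$ in $U$ and $v(z_0) = 0$. Picking $r > 0$ small enough that a dilated concentric gauge ball $B_\rho(z_0, \kappa r)$ is compactly contained in $U$ (here $\kappa > 1$ is the dilation factor entering the Harnack inequality), an application of \cite[Theorem 3.1]{garofalo-harnack} to the nonnegative $p$-harmonic function $v$ gives
\[
\sup_{B_\rho(z_0, r)} v \;\leq\; C \inf_{B_\rho(z_0, r)} v \;\leq\; C\, v(z_0) \;=\; 0,
\]
whence $v \equiv 0$, i.e. $u \equiv M$, on $B_\rho(z_0, r)$.

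To conclude, I would propagate this local constancy by connectedness: the set $A := \{z \in U : u(z) = M\}$ is open by the previous step, closed in $U$ by continuity of $u$, and nonempty, so $A = U$ since $U$ is connected; thus $u \equiv M$ on $U$, contradicting non-constancy. Applying the same reasoning to $-u$ — which is $p$-harmonic for the same algebraic reason as $v$ above — rules out the attainment of an interior minimum, which completes the argument.

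I do not expect any genuine obstacle here: the proof is routine once the Harnack inequality is in hand. The only points that deserve a line of care are the two preliminary reductions (to a connected domain and to a continuous representative), the elementary verification that $M - u$ and $-u$ inherit $p$-harmonicity from $u$, and the observation that the boundedness of $U$ is used only to guarantee that $\max_{\overline U} u$ is finite.
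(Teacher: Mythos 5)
Your argument is correct and follows exactly the route the paper has in mind: the paper offers no written proof, simply asserting that the statement is an immediate consequence of the Harnack inequality of \cite[Theorem 3.1]{garofalo-harnack} (as highlighted in \cite[Theorem 2.5]{danielli-harnack}), and your proposal is the standard fleshing-out of that derivation via the open-closed-connectedness argument applied to $M-u$. The only cosmetic point is that, as in the classical setting, the conclusion should be read for non-constant $u$ (which is how you use it, and how it is applied in Corollary \ref{u-strict}), and one should take $M=\sup_U u=u(z_0)$ rather than $\max_{\overline U}u$, since continuity up to $\partial U$ is not assumed.
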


\subsubsection*{Existence and uniqueness for \eqref{pb-bounded}}
In the following statement we resume an existence-uniqueness theorem for problem \eqref{pb-bounded}, recalling also a suitable definition of weak solutions. It is well known that such solution exists, and can be proved exactly as in the Euclidean case, considered in full details in \cite[Appendix I]{heinonen-book}, see in particular Corollary 17.3 there, and compare also with \cite[Section 3]{danielli-harnack}. The uniqueness immediately follows from the Comparison Principle recalled in Proposition \ref{comparison}.

\begin{theorem}[Existence and uniqueness of weak solutions to problem \eqref{pb-bounded}]
\label{existence}
Let $\Omega_1$ and $\Omega_2$ and $\Omega_2 \subset \Heis^n$ be open bounded subsets of $\Heis^n$ satisfying $\Omega_1 \Subset \Omega_2$. Then, there exists an unique weak solution $u$ to \eqref{pb-bounded}, that is $u \in HW^{1, p}(\Omega_2 \setminus \overline{\Omega}_1)$ is horizontally weakly $p$-harmonic and,  letting $\theta \in C^{\infty}_0(\Omega_2)$ satisfy $\theta \equiv 1$ on $\overline{\Omega}_1$, we have $u - \theta \in HW^{1, p}_0(\Omega_2 \setminus \overline{\Omega}_1)$.
\end{theorem}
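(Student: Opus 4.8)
The plan is to obtain existence by the direct method of the calculus of variations and to deduce uniqueness from the Comparison Principle of Proposition \ref{comparison}; the whole argument is a transcription of the classical Euclidean one, for which we may refer to \cite[Appendix I]{heinonen-book}.

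First I would fix $\theta \in C^\infty_0(\Omega_2)$ with $\theta \equiv 1$ on $\overline{\Omega}_1$, introduce the affine admissible class
\[
\mathcal{A} \, = \, \{\, v \in HW^{1,p}(\Omega_2 \setminus \overline{\Omega}_1) \, : \, v - \theta \in HW^{1,p}_0(\Omega_2 \setminus \overline{\Omega}_1) \,\},
\]
which is nonempty because $\theta \in \mathcal{A}$, and consider the $p$-energy functional
\[
\mathcal{E}(v) \, = \, \int_{\Omega_2 \setminus \overline{\Omega}_1} \abs{\nabla_{\Heis^n} v}^p \, dz .
\]
Then I would invoke the standard functional-analytic toolkit available in the Carnot-group setting: for $p > 1$ the space $HW^{1,p}$ is uniformly convex, hence reflexive, and on the bounded open set $\Omega_2 \setminus \overline{\Omega}_1$ a Poincaré inequality $\norm{w}_{L^p} \le C\,\norm{\nabla_{\Heis^n} w}_{L^p}$ holds for $w \in HW^{1,p}_0$. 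Writing $v = \theta + w$ with $w \in HW^{1,p}_0$ and using Poincaré, $\mathcal{E}$ is coercive on $\mathcal{A}$; being convex and strongly continuous on $HW^{1,p}$, it is sequentially weakly lower semicontinuous. A minimizing sequence is therefore bounded in $HW^{1,p}$, and a weak limit $u$ of a subsequence lies in $\mathcal{A}$ (which is convex and strongly closed, hence weakly closed) and minimizes $\mathcal{E}$ over $\mathcal{A}$.

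Next I would identify the Euler--Lagrange equation: for $\phi \in C^1_0(\Omega_2 \setminus \overline{\Omega}_1)$ one has $u + s\phi \in \mathcal{A}$ for every $s \in \R$, and differentiating $s \mapsto \mathcal{E}(u+s\phi)$ under the integral sign gives, at $s = 0$, precisely $p$ times the left-hand side of \eqref{weak}; since $u$ is a minimizer this quantity vanishes, so $u$ is horizontally weakly $p$-harmonic, and by construction $u - \theta \in HW^{1,p}_0(\Omega_2 \setminus \overline{\Omega}_1)$, i.e. $u$ is a weak solution of \eqref{pb-bounded}. For uniqueness, if $u_1, u_2$ are two weak solutions then $u_1 - u_2 = (u_1 - \theta) - (u_2 - \theta) \in HW^{1,p}_0(\Omega_2 \setminus \overline{\Omega}_1)$, whence $\min(u_1 - u_2, 0) \in HW^{1,p}_0(\Omega_2 \setminus \overline{\Omega}_1)$ as well, since $HW^{1,p}_0$ is stable under the operation $w \mapsto \min(w,0)$; Proposition \ref{comparison} then gives $u_1 \ge u_2$ on $\Omega_2 \setminus \overline{\Omega}_1$, and exchanging the two solutions yields $u_1 = u_2$.

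I do not expect a serious obstacle: the only ingredients that are not literally the Euclidean argument are the reflexivity of $HW^{1,p}$, the Poincaré inequality on $HW^{1,p}_0$ over a bounded domain, and the lattice property of $HW^{1,p}_0$, all of which are available in the Carnot-group literature (see \cite{danielli-harnack} and the references therein). The one minor technical care, exactly as in the Euclidean case, is the differentiability of $s \mapsto \mathcal{E}(u+s\phi)$ when $1 < p < 2$ at points where $\nabla_{\Heis^n} u$ vanishes, which is handled by a routine dominated-convergence argument on the difference quotients.
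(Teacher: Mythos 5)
Your proposal is correct and follows essentially the same route as the paper, which does not write out the argument but delegates existence to the standard variational proof in \cite[Appendix I, Corollary 17.3]{heinonen-book} (compare also \cite[Section 3]{danielli-harnack}) and obtains uniqueness from the Comparison Principle of Proposition \ref{comparison}, exactly as you do. You have simply filled in the standard details (reflexivity, Poincar\'e inequality, weak lower semicontinuity, Euler--Lagrange equation, lattice property of $HW^{1,p}_0$) that the cited references supply.
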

It is important to point out that, again as a straightforward application of the Comparison Principle, if $\tilde u$ is another such function satisfying the properties in the statement of Theorem \ref{existence} relatively to another boundary datum $\tilde{\theta}$ fulfilling the same assumptions asked for $\theta$, then $u$ coincides with $\tilde u$ on $\Omega_2 \setminus \overline{\Omega}_1$. This is observed with some more details for example in \cite[p. 115]{heinonen-book}.

For what it concerns the continuous attainment of the boundary datum, we point out that in \cite[Theorem 3.9]{danielli-harnack} continuity up to the boundary for Dirichlet problems involving the horizontal $p$-Laplacian is proved for domains with boundary with a so-called \emph{corkscrew} on any point of the boundary. 

\medskip

Let us finally observe that as a consequence of Propositions \ref{comparison} and \ref{strong} we have $0 < u < 1$ on $\Omega_2 \setminus \overline{\Omega}_1$. Indeed, first observe that, since $u = \theta + f$ for some $f \in HW_0^{1, p}$ we can find by approximating $f$  a sequence $\{u_k\}_{k \in \N}$ of functions in $C^1 (\Omega_2 \setminus \overline{\Omega}_1)$ approximating $u$ in $HW^{1, p}$-norm, and satisfying $u_k - \theta \in C^1_c (\Omega_2 \setminus \overline{\Omega}_1)$. In particular, for any $k \in \N$, $u_k$ satisfies $\mathrm{min}{(u_k, 0)} \in HW^{1, p}_0 (\Omega_2 \setminus \overline{\Omega}_1)$, and thus, passing to the limit as $k \to \infty$, we infer that the same holds for $u$. Thus, being $u$ $p$-harmonic, we get from the Comparison Principle recalled in Proposition \ref{comparison} that $u \geq 0$ on the annulus. Arguing in the same way for the $p$-harmonic function $1 - u$, we also find that $u \leq 1$ on $\Omega_2 \setminus \overline{\Omega}_1$. However by the Strong Maximum Principle of Proposition \ref{strong}, the inequalities $0 \leq u \leq 1$ must be strict, as claimed.
We record what has just been said in the following corollary.
\begin{corollary}
\label{u-strict}
Let $u$ be the solution to \eqref{pb-bounded}, in the sense of Theorem \ref{existence}. Then, we have $0 < u <1$ in ${\Omega_2} \setminus \overline{\Omega}_1$.
\end{corollary}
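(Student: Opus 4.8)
The plan is to first obtain the non-strict bounds $0 \le u \le 1$ from the Comparison Principle (Proposition \ref{comparison}), and then upgrade them to the strict inequalities via the Strong Maximum Principle (Proposition \ref{strong}).

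For the lower bound I would compare $u$ with the constant function $0$, which is trivially $p$-harmonic; in order to invoke Proposition \ref{comparison} it suffices to check that $\min(u,0) \in HW^{1,p}_0(\Omega_2\setminus\overline\Omega_1)$. To this end, recall from Theorem \ref{existence} that $u = \theta + f$ with $\theta \in C^\infty_0(\Omega_2)$, $\theta\equiv 1$ on $\overline\Omega_1$, and $f \in HW^{1,p}_0(\Omega_2\setminus\overline\Omega_1)$. Approximating $f$ in the $HW^{1,p}$-norm by functions $f_k \in C^1_c(\Omega_2\setminus\overline\Omega_1)$, the functions $u_k := \theta + f_k$ are of class $C^1$, converge to $u$ in $HW^{1,p}$, equal $1$ in a neighbourhood of $\partial\Omega_1$ and vanish in a neighbourhood of $\partial\Omega_2$; hence each $\min(u_k,0)$ is a Lipschitz function with compact support in $\Omega_2\setminus\overline\Omega_1$, and therefore belongs to $HW^{1,p}_0(\Omega_2\setminus\overline\Omega_1)$. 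Since $t\mapsto \min(t,0)$ is $1$-Lipschitz, the map $g \mapsto \min(g,0)$ is continuous on $HW^{1,p}$, so letting $k\to\infty$ gives $\min(u,0)\in HW^{1,p}_0(\Omega_2\setminus\overline\Omega_1)$. Proposition \ref{comparison} applied with $v\equiv 0$ then yields $u \ge 0$ on $\Omega_2\setminus\overline\Omega_1$.

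For the upper bound I would run the identical argument on $1-u$, which is again $p$-harmonic since the weak formulation \eqref{weak} is invariant under $u\mapsto c-u$ (the map $\xi\mapsto|\xi|^{p-2}\xi$ is odd) and which has boundary data $0$ on $\partial\Omega_1$ and $1$ on $\partial\Omega_2$; the same approximation shows $\min(1-u,0)\in HW^{1,p}_0(\Omega_2\setminus\overline\Omega_1)$, whence $1-u \ge 0$, i.e. $u \le 1$. Finally, to pass to strict inequalities: if $u(z_0)=0$ at some $z_0 \in \Omega_2\setminus\overline\Omega_1$, then, since $u\ge 0$ throughout the bounded open set $\Omega_2\setminus\overline\Omega_1$, the point $z_0$ would be an interior minimum of the $p$-harmonic function $u$, contradicting Proposition \ref{strong}; likewise $u(z_0)=1$ would be an interior maximum, again excluded by Proposition \ref{strong}. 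Hence $0<u<1$ on $\Omega_2\setminus\overline\Omega_1$.

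The only mildly delicate point in the whole argument is the verification that $\min(u,0)$ and $\min(1-u,0)$ lie in $HW^{1,p}_0$, i.e. that the Dirichlet data of \eqref{pb-bounded} are attained in the correct Sobolev sense so that the Comparison Principle is applicable; once this approximation step is granted, everything reduces to a direct citation of Propositions \ref{comparison} and \ref{strong}.
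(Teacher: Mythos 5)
Your proposal is correct and follows essentially the same route as the paper: decompose $u=\theta+f$, approximate $f$ by $C^1_c$ functions to verify $\min(u,0)\in HW^{1,p}_0(\Omega_2\setminus\overline\Omega_1)$, apply the Comparison Principle of Proposition \ref{comparison} to get $u\geq 0$, repeat for $1-u$, and conclude strictness via the Strong Maximum Principle of Proposition \ref{strong}. You merely spell out a few details the paper leaves implicit (the Lipschitz continuity of $t\mapsto\min(t,0)$ and the $p$-harmonicity of $1-u$), which is fine.
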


\section{Proof of Theorem \ref{th-starshaped-bounded}}
\label{proof-sec}
It is quite straightforward, but fundamental for our arguments, to observe that if a function $f$ is $p$-harmonic, then so does the dilated $f_\lambda$ defined as $f_\lambda (z) = f (\delta_\lambda (z))$. 
\begin{lemma}[Dilation-invariance of $p$-harmonicity]
\label{dilation-invariance}
Let $U \subseteq \Heis^n$, and $f \in HW^{1, p} (U)$ be a $p$-harmonic function. Then, the function $f_\lambda (x)$ belongs to $HW^{1, p}(\delta_{-\lambda}(U))$ and it is $p$-harmonic.
\end{lemma}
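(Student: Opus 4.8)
The plan is to reduce everything to the behaviour of the horizontal gradient under the dilation $\delta_\lambda$, since the whole weak formulation \eqref{weak} is built out of $\nabla_{\Heis^n}$. First I would record the key algebraic fact: the dilations $\delta_\lambda$ are group automorphisms that act on the horizontal layer $V_1$ by the scalar $e^{\lambda}$ and on $V_2$ by $e^{2\lambda}$. Consequently, for the left-invariant horizontal fields one has the intertwining relations $X_i(f\circ\delta_\lambda)=e^{\lambda}\,(X_i f)\circ\delta_\lambda$ and likewise for $Y_i$; this is a direct computation from \eqref{lie} and \eqref{dilations}, using that $\delta_\lambda$ multiplies $x_i,y_i$ by $e^{\lambda}$ and $t$ by $e^{2\lambda}$. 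In invariant terms, $\nabla_{\Heis^n} f_\lambda(z) = e^{\lambda}\,(\nabla_{\Heis^n} f)(\delta_\lambda(z))$, so $\abs{\nabla_{\Heis^n} f_\lambda(z)} = e^{\lambda}\,\abs{\nabla_{\Heis^n} f}(\delta_\lambda(z))$.

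Next I would check membership $f_\lambda\in HW^{1,p}(\delta_{-\lambda}(U))$. Since $\delta_\lambda$ is a smooth diffeomorphism from $\delta_{-\lambda}(U)$ onto $U$ whose Jacobian with respect to Lebesgue measure $\Ln$ is the constant $e^{Q\lambda}$ (with $Q=2n+2$), the change of variables $z\mapsto\delta_\lambda(z)$ together with the gradient identity above shows
\[
\int_{\delta_{-\lambda}(U)}\abs{f_\lambda}^p+\abs{\nabla_{\Heis^n} f_\lambda}^p\,dz
= e^{-Q\lambda}\int_{U}\abs{f}^p+e^{p\lambda}\abs{\nabla_{\Heis^n} f}^p\,dz,
\]
which is finite; one first does this for $f\in C^1(U)$ and then passes to the metric completion, noting that $f\mapsto f_\lambda$ is (up to the fixed constants above) an isometry-like bijection between the relevant Sobolev spaces. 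The same computation applied to $C^1_0$ functions shows it maps $HW^{1,p}_0(U)$ into $HW^{1,p}_0(\delta_{-\lambda}(U))$, which will also be convenient for the companion statements about \eqref{pb-bounded}.

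Finally, for the weak $p$-harmonicity: take any test function $\psi\in C^1_0(\delta_{-\lambda}(U))$ and set $\phi=\psi\circ\delta_{-\lambda}\in C^1_0(U)$; then $\psi=\phi\circ\delta_\lambda=\phi_\lambda$. Plugging $f_\lambda$ and $\psi=\phi_\lambda$ into the left-hand side of \eqref{weak} over $\delta_{-\lambda}(U)$, using $X_if_\lambda=e^{\lambda}(X_if)\circ\delta_\lambda$, $X_i\phi_\lambda=e^{\lambda}(X_i\phi)\circ\delta_\lambda$ (and the $Y_i$ analogues), and $\abs{\nabla_{\Heis^n}f_\lambda}^{p-2}=e^{(p-2)\lambda}(\abs{\nabla_{\Heis^n}f}^{p-2})\circ\delta_\lambda$, every integrand term acquires a global factor $e^{(p-2)\lambda}e^{\lambda}e^{\lambda}=e^{p\lambda}$ and is evaluated at $\delta_\lambda(z)$; the change of variables $z\mapsto\delta_\lambda(z)$ then contributes $e^{-Q\lambda}$ and turns the integral into $e^{(p-Q)\lambda}$ times the left-hand side of \eqref{weak} for $f$ against $\phi$ over $U$, which vanishes since $f$ is $p$-harmonic. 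Hence $f_\lambda$ satisfies \eqref{weak} on $\delta_{-\lambda}(U)$, as claimed. I expect no serious obstacle here; the only point requiring a little care is the density/completion argument for the $HW^{1,p}$ membership, i.e. verifying that the gradient identity derived for $C^1$ functions genuinely passes to the abstract completion — this is routine because $f\mapsto f_\lambda$ is bi-Lipschitz between the two normed spaces with the explicit constants above.
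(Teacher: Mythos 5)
Your proposal is correct and follows essentially the same route as the paper's proof: the intertwining relations $X_i f_\lambda = e^{\lambda}(X_i f)_\lambda$, $Y_i f_\lambda = e^{\lambda}(Y_i f)_\lambda$, the change of variables with constant Jacobian $e^{Q\lambda}$ for the $HW^{1,p}$ membership, and the substitution of test functions $\phi \mapsto \phi_\lambda$ in the weak formulation so that the integral becomes an explicit power of $e^{\lambda}$ times the weak formulation for $f$. Your bookkeeping of the exponents and your remark on passing the gradient identity through the metric completion are slightly more careful than what is written in the paper, but the argument is the same.
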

\begin{proof}
It is obvious from the definition of $\delta_{\lambda}(U)$ that $f_\lambda$ is well defined on such set. In order to prove the other assertions, the main computation is the following. We have, for $j = 1, \dots, n$,
\begin{equation}
\label{xjlambda}
X_j(f_\lambda (z)) = e^\lambda\left[\frac{\partial f}{\partial x_j} (\delta_\lambda (z)) + 2 e^\lambda y \frac{\partial f}{\partial t} (\delta_\lambda(z))\right] = e^\lambda \left(X_j f\right)_\lambda (z),
\end{equation} 
and analogously
\begin{equation}
\label{yjlambda}
Y_j(f_\lambda (z)) = e^\lambda\left[\frac{\partial f}{\partial y_j} (\delta_\lambda (z)) - 2e^\lambda x  \frac{\partial f}{\partial t} (\delta_\lambda(z))\right] = e^\lambda \left(Y_j f\right)_\lambda (z).
\end{equation}
The inclusion of $f_\lambda$ in $HW^{1, p}(\delta_\lambda (U))$ is a direct consequence of \eqref{xjlambda} and \eqref{yjlambda}, while  the $p$-harmonicity is shown as follows. We have, again as a consequence of the above relations
\begin{equation}
\label{p-harm-lambda}
\begin{split}
\sum_{i = 1}^n \int_{\delta_{-\lambda} (U)} &\left(\abs{\nabla_{\H^n} f_\lambda}^{p-2} X_i  f_\lambda \right)X_i \phi +  \left(\abs{\nabla_{\H^n} f_\lambda}^{p-2} Y_i f_\lambda\right) Y_i \phi \, \, d z =\\
& = e^{\lambda(p-1)} \sum_{i = 1}^n \int_{\delta_{-\lambda} (U)} \left[\abs{\nabla_{\H^n} f}_\lambda^{p-2} (X_i  f)_\lambda \right]X_i \phi +  \left[\abs{\nabla_{\H^n} f}_\lambda^{p-2} (Y_i f)_\lambda\right] Y_i \phi \, \, d z \\
& = e^{\lambda(p-1) - Q} \sum_{i = 1}^n \int_{U} \left[\abs{\nabla_{\H^n} f}^{p-2} (X_i  f) \right](X_i \phi)_\lambda +  \left[\abs{\nabla_{\H^n} f}\lambda^{p-2} (Y_i f)\right] (Y_i \phi)_\lambda \, \, d z \\
& = e^{\lambda(p-2) - Q} \sum_{i = 1}^n \int_{U} \left[\abs{\nabla_{\H^n} f}^{p-2} (X_i  f) \right]X_i \phi_\lambda +  \left[\abs{\nabla_{\H^n} f}\lambda^{p-2} (Y_i f)\right] Y_i \phi_\lambda \, \, d z \\
& = 0
\end{split}
\end{equation}
for any $\phi \in C^1_c(\delta_\lambda (U))$. The last step follows from the $p$-harmonicity of $f$ in $U$, since $\phi_\lambda$ clearly belongs to $C^1_c(U)$. 
\end{proof}

We are finally in position to prove the statement in turn implying Theorem \ref{th-starshaped-bounded}.
\begin{theorem}
\label{thm-prestarshaped}
Let $u$ be a weak solution  to \eqref{pb-bounded} with $p > 1$ for $\Omega_1 \subset \Heis^n$ and $\Omega_2 \subset \Heis^n$ bounded sets with $C^1$ boundaries that are \emph{strictly starshaped} with respect to  the origin $O \in \Heis^n$ and such that $\Omega_1 \Subset \Omega_2$. Assume also that $\Omega_1$ satisfies an \emph{uniform exterior gauge ball condition} and $\Omega_2$ satisfies an \emph{uniform interior gauge ball condition}.
Then, there exists a positive constant $M$ such that 
\begin{equation}
\label{prestarshaped-eq}
\left \langle \nabla u, Z \right\rangle < - M < 0
\end{equation}
at any point where $\nabla u$ exists.
\end{theorem}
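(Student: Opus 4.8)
The strategy is a barrier argument comparing $u$, and its dilates $u_\lambda$, against the explicit solution \eqref{model} on suitable gauge annuli. Fix a point $z_0\in\partial\Omega_1$ (the argument at $\partial\Omega_2$ is symmetric). By the uniform exterior gauge ball condition there is a ball $B_\rho(z_1,R)\subset\Heis^n\setminus\overline\Omega_1$ tangent at $z_0$. For $\lambda>0$ small the dilated function $u_\lambda(z)=u(\delta_\lambda(z))$, which by Lemma \ref{dilation-invariance} is again $p$-harmonic, is defined on $\delta_{-\lambda}(\Omega_2\setminus\overline\Omega_1)$; since $\Omega_1$ is strictly starshaped, $\delta_{-\lambda}(\Omega_1)\Subset\Omega_1$, and likewise $\Omega_2\Subset\delta_{-\lambda}(\Omega_2)$, so $u_\lambda$ is defined on a neighbourhood of $\overline{\Omega_2\setminus\overline\Omega_1}$. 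The first goal is to show, via the Comparison Principle (Proposition \ref{comparison}), that $u_\lambda \geq u$ on $\Omega_2\setminus\overline\Omega_1$ for all small $\lambda>0$: on $\partial\Omega_1$ one has $u=1$ while $u_\lambda \geq$ the value of $u$ on $\delta_\lambda(\partial\Omega_1)\subset\Omega_1$... but $u$ only equals $1$ on $\overline\Omega_1$, so in fact $u_\lambda\equiv 1 = u$ there; on $\partial\Omega_2$, $u=0$ while $\delta_\lambda(\partial\Omega_2)\subset\Omega_2$ gives $u_\lambda>0=u$. Hence $\min(u-u_\lambda,0)$ has the right boundary behaviour and Proposition \ref{comparison} yields $u_\lambda\geq u$. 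Differentiating at $\lambda=0$ gives $\langle\nabla u,Z\rangle\leq 0$ wherever $\nabla u$ exists; the content of the theorem is the \emph{strict, uniform} bound.

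The quantitative improvement comes from inserting an explicit barrier between $u$ and $u_\lambda$ near the boundary. Using Proposition \ref{entrata-palla}, choose $\overline\lambda>0$ so that $\delta_\lambda(z_0)\in B_\rho(z_1,R)$ for $0<\lambda\leq\overline\lambda$, with $\overline\lambda$ depending continuously on the gauge ball data, hence — by compactness of $\partial\Omega_1$ and uniformity of the exterior ball — bounded below by a positive constant independent of $z_0$. On the gauge annulus $B_\rho(z_1,R')\setminus\overline{B_\rho(z_1,R)}$ for a slightly larger $R'$ with $\{u\geq\tfrac12\}\cup\overline\Omega_1\subset B_\rho(z_1,R')$, compare $u$ with the model solution $w$ built from \eqref{model} centred at $z_1$, normalized so that $w=0$ on $\partial B_\rho(z_1,R)$ and $w$ equals (a lower bound for) $u$ on $\partial B_\rho(z_1,R')$; the Comparison Principle gives $u\geq w$ there, in particular $u=0\leq w$ fails to obstruct since $u>0$ on the annulus by Corollary \ref{u-strict}. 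Then estimate $u_\lambda - u \geq w_\lambda - w$ on the relevant region, and use that $w$ is an explicit radial (gauge) function whose derivative along $Z$ — i.e. $\frac{d}{d\lambda}w(\delta_\lambda(z))|_{\lambda=0}$ — is explicitly computable and strictly negative, bounded away from zero uniformly on $B_\rho(z_1,R')\setminus B_\rho(z_1,R)$ because $\rho(z_1^{-1}\cdot z)$ stays in a compact subinterval of $(0,\infty)$ there. Dividing $u_\lambda-u\geq w_\lambda-w$ by $\lambda$ and letting $\lambda\to0^+$ produces $\langle\nabla u,Z\rangle \leq \langle\nabla w,Z\rangle < -M_{z_0}<0$ wherever $\nabla u$ exists near $z_0$.

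A parallel argument at $\partial\Omega_2$, using the uniform \emph{interior} gauge ball condition and the second part of Proposition \ref{entrata-palla} (now with $\lambda<0$, so that one compares with $u_{-\lambda}\leq u$ via an interior-tangent model solution), gives the same type of strict negative bound near $\partial\Omega_2$. On the remaining compact part of $\overline{\Omega_2\setminus\overline\Omega_1}$ away from both boundaries, the monotonicity $u_\lambda\geq u$ already gives $\langle\nabla u,Z\rangle\leq 0$; to upgrade this to a strict bound one runs the same barrier comparison but now the relevant fact is that $u$ has no critical points of $\langle\nabla u,Z\rangle$-vanishing type, which follows by a covering argument: cover $\overline{\Omega_2\setminus\overline\Omega_1}$ by finitely many gauge annuli on each of which an explicit model solution can be slipped between $u$ and a dilate, the negativity of $\langle\nabla(\text{model}),Z\rangle$ being uniform on each piece. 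Taking $M$ to be the minimum of the finitely many constants obtained yields \eqref{prestarshaped-eq}. The main obstacle — and the step requiring genuine care — is the construction of the intermediate barrier and the verification that $u_\lambda - u \geq w_\lambda - w$ on the correct region with matching (inequality) boundary data: one must arrange the radii and the normalization of $w$ so that simultaneously $w\leq u_\lambda$ on the inner sphere and $w\geq u$ on the outer sphere, which is exactly where strict starshapedness of $\Omega_1,\Omega_2$ (giving a definite gap between $\delta_\lambda(\partial\Omega_i)$ and $\partial\Omega_i$, linear in $\lambda$) and the gauge ball conditions (giving the tangency needed for the model comparison) are both indispensable. A secondary technical point is that $u$ is only assumed $HW^{1,p}$ a priori, so all pointwise gradient statements must be read "wherever $\nabla u$ exists", and the passage to the limit $\lambda\to0^+$ in $\frac{1}{\lambda}(u_\lambda-u)\geq\frac{1}{\lambda}(w_\lambda-w)$ must be justified at such points using \eqref{flow-z}.
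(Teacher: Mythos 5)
Your overall strategy is the paper's: dilate $u$, build explicit barriers from the fundamental solution \eqref{green} on the tangent gauge annuli, and invoke Proposition \ref{entrata-palla} together with the comparison principle. But the execution has two genuine problems. First, the directions are systematically reversed: for $\lambda>0$ the map $\delta_\lambda$ \emph{expands}, so $\delta_\lambda(\partial\Omega_1)$ lies outside $\Omega_1$, $\Omega_2^{-\lambda}=\delta_{-\lambda}(\Omega_2)\subset\Omega_2$ (not $\Omega_2\Subset\delta_{-\lambda}(\Omega_2)$), and the correct preliminary inequality is $u_\lambda\leq u$, not $u_\lambda\geq u$; your claimed $u_\lambda\geq u$ would give $\langle\nabla u,Z\rangle\geq 0$ upon differentiating, contradicting the bound you then state. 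The same reversal reappears at the end: from $u_\lambda-u\geq w_\lambda-w$ and $\lambda>0$ one gets $\langle\nabla u,Z\rangle\geq\langle\nabla w,Z\rangle$, a \emph{lower} bound, not the upper bound $\langle\nabla u,Z\rangle\leq\langle\nabla w,Z\rangle<-M$ you claim.

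Second, and more seriously, the pivotal inequality ``$u_\lambda-u\geq w_\lambda-w$ on the relevant region'' cannot be obtained from Proposition \ref{comparison}: the horizontal $p$-Laplacian is nonlinear for $p\neq 2$, so neither $u_\lambda-u$ nor $w_\lambda-w$ is $p$-harmonic, and there is no comparison principle for differences of solutions. Arranging ordered boundary data for $w$ against $u$ and against $u_\lambda$ separately, as you propose, does not yield an ordering of the differences. The paper circumvents exactly this: it first derives the pointwise boundary estimates $u(\delta_\lambda(z))\leq 1-\tfrac{\lambda}{2}LCK$ on $\partial\Omega_1$ and $u(\delta_{-\lambda}(z))\geq\tfrac{\lambda}{2}LCK$ on $\partial\Omega_2$ by comparing $u$ with the single barriers $1-Lv_1$ and $Lv_2$, and then applies the comparison principle once more to the two genuinely $p$-harmonic functions $u_\lambda$ and $u-\tfrac{\lambda}{2}LCK$ (a solution minus a constant is still a solution) on the full annulus $\Omega_2^{-\lambda}\setminus\overline{\Omega}_1$. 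It is this global comparison that propagates the boundary decay to every interior point; your proposed substitute --- covering the interior by gauge annuli and ``slipping'' model solutions between $u$ and $u_\lambda$ --- has no tangency or boundary data available away from $\partial\Omega_1\cup\partial\Omega_2$ and cannot be carried out, while the remark that $u$ has no interior points where $\langle\nabla u,Z\rangle$ vanishes is precisely the conclusion to be proved. You should restructure the argument along these lines before the quantitative part can be considered complete.
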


\begin{proof}
As already declared, we prove the result for $p \neq Q$, addressing the reader to Remark \ref{p=Q} for indications about the straightforward extension to the case $p = Q$.
Consider, for any $z \in \partial \Omega_1$ the gauge ball $B_\rho(z_1, R)$ contained in $\Heis^n \setminus\overline{\Omega_1}$ and touching $\partial\Omega_1$  in $z$. Similarly, for $z \in \partial \Omega_2$, consider $B_\rho(z_2, R)$ contained in ${\Omega_2}$ and touching $\partial\Omega_2$ in $z$. These tangent gauge balls, with uniform radius $R$, exist by assumption, see Definition \ref{gauge-ball-def}. 
On $B_\rho(z_1, R)$, define a function $v_1$ satisfying $v =1 $ on $B_\rho (z_1, R/2)$ and 
\begin{equation}
\label{barrier-function}
v_1 (\cdot) = \alpha \, \rho(z_1^{-1} \cdot)^{-\frac{Q - p}{p-1}} + \beta,
\end{equation}
on $B_\rho(z_1, R) \setminus \overline B_\rho(z_1, R/2)$,
where the constants $\alpha$ and $\beta$ are chosen so that $v_1 = 0$ on $\partial B_\rho(z_1, R)$ and $v_1 = 1$ on $\partial B_\rho(z_1, R/2)$. Analogously, define on $B_\rho(z_2, R)$ a function $v_2$ satisfying $v_2 = 1$ on $B_\rho (z_2, R/2)$ and
\begin{equation}
v_2 (\cdot) = \alpha \, \rho(z_2^{-1} \cdot)^{-\frac{Q - p}{p-1}} + \beta,
\end{equation}
on $B_\rho(z_2, R) \setminus \overline B_\rho(z_2, R/2)$,
where the constants $\alpha$ and $\beta$ are chosen so that $v_2 = 0$ on $\partial B_\rho(z_2, R)$ and $v_2 = 1$ on $\partial B_\rho(z_2, R/2)$. Explicitly, we have
\begin{equation}
\alpha = \frac{R^{\frac{Q - p}{p-1}}}{2^{\frac{Q - p}{p-1}} - 1} , \qquad \beta = - \frac{1}{2^{\frac{Q - p}{p-1}} - 1}.
\end{equation}

Observe now that the function $v_1$ and $v_2$ are smooth up to the boundary in $\overline{B_\rho (z_1, R)} \setminus B_\rho(z_1, R/2)$ and $\overline{B_\rho (z_2, R)} \setminus B_\rho(z_2, R/2)$ respectively, and they both enjoy nonvanishing gradient in these sets. Actually, a direct computation shows that
\begin{equation}
\label{lowerbound-gradient}
\abs{\nabla v_1}(w_1) \geq C , \qquad \abs{\nabla v_2}(w_2) \geq C  
\end{equation}
for any $w_1 \in \overline{B_\rho (z_1, R)} \setminus B_\rho(z_1, R/2)$ and any $w_2 \in \overline{B_\rho (z_2, R)} \setminus B_\rho(z_2, R/2)$, where the constant $C$ does not depend on $w_1$ nor on $w_2$.
Such gradients being nonvanishing, combined with  $\partial B_\rho (z_1, R)$ and $\partial B_\rho (z_2, R)$  being regular level sets of $v_1$ and $v_2$, imply, on the one hand, that
\begin{equation}
\label{limit-barrier1}
\lim_{B_\rho(z, R) \ni w \to z}\frac{\nabla v_1}{\abs{\nabla v_1}}(w) =  \nu_{\Omega_1} (z),
\end{equation}
where $z \in \partial \Omega_1$ and $\nu_{\Omega_1} (z)$ is the Euclidean outward unit normal to $\Omega_1$, and on the other hand that
 \begin{equation}
\label{limit-barrier2}
\lim_{B_\rho(z, R) \ni w \to z}\frac{\nabla v_2}{\abs{\nabla v_2}}(w) =  -\nu_{\Omega_2} (z),
\end{equation}
where this time $z \in \partial \Omega_2$ and $\nu_{\Omega_2} (z)$ is the Euclidean outward unit normal to $\Omega_2$. In getting \eqref{limit-barrier1} and \eqref{limit-barrier2}, we again used the tangency property of the gauge balls with respect to the boundaries of $\Omega_1$ and $\Omega_2$. Observe now that there exists $K > 0$ such that 
\begin{equation}
\label{strict-star-dim}
\left\langle \nu, Z\right\rangle (z) \geq K
\end{equation}
 for any $z \in \partial \Omega_1 \cup \partial \Omega_2$,  as follows from the strict starshapedness of $\Omega_1$ and $\Omega_2$, their boundedness and the $C^1$-regularity of their boundaries.
Now, Proposition \ref{entrata-palla}, the limits \eqref{limit-barrier1} and \eqref{limit-barrier2}, the uniform lower bounds on the gradients of $v_1$ and $v_2$ \eqref{lowerbound-gradient}, and  \eqref{strict-star-dim},
imply that
\begin{equation}
\label{uniform-barrier1}
\frac{v_1 (\delta_\lambda (z)) - v_1(z)}{\lambda} = \frac{v_1(\delta_\lambda (z))}{\lambda} \geq \frac{1}{2} C K,  
\end{equation}
for any $z \in \partial \Omega_1$
and analogously
\begin{equation}
\label{uniform-barrier2}
\frac{v_2(\delta_{-\lambda} (z))}{\lambda} \geq \frac{1}{2} C K
\end{equation}
for any $z \in \Omega_2$, for any $0 < \lambda < \overline{\lambda}$. Importantly, observe that $\overline{\lambda}$ can be made independent of $z \in \partial \Omega_1 \cup \partial \Omega_2$, as it immediately follows from the continuity properties of such parameter stated in Proposition \ref{entrata-palla} and the compactness of $\partial \Omega_1 \cup \partial \Omega_2$.
\smallskip

As observed in Corollary \ref{u-strict}, $0 < u < 1$ in the open annulus, and thus, combining this information with the continuity of such function, we deduce that there exists a constant $0 < L < 1$ such that
\begin{equation}
\label{strong-applied}
\frac{1}{L} \leq u (w) \leq 1 - L
\end{equation}
for any $w \in \partial B_\rho(z, R/2)$. A straightforward compactness argument involving the continuity of $u$ shows that $L$ can be chosen independently of $z$.
Consider now, for $z \in \partial \Omega_1$, the $p$-harmonic functions $u$ and $1 - L v_1$  on $B_\rho(z, R) \setminus \overline{B_\rho(z, R/2)}$. Observe that $u \leq  1 =1 - Lv_1$ on $\partial B_\rho (z, R)$, since $u \leq 1$ on the whole annulus $\overline{\Omega_2} \setminus \Omega_1$ and $v_1 = 0$ on $\partial B_\rho (z, R)$ by construction. Moreover, $u \leq 1 - L = 1 - L v_1$ on $\partial B_\rho (z, R/2)$ in light of the second inequality in \eqref{strong-applied} and again by construction of $v_1$. Then, the comparison principle for $p$-harmonic functions recalled in Proposition \ref{comparison} combined with \eqref{uniform-barrier1} implies
\begin{equation}
\label{barrier-applied1}
\frac{1 - u(\delta_\lambda(z))}{\lambda} \geq  \frac{L v_1(\delta_\lambda(z))}{\lambda} \geq  \frac{1}{2} LCK
\end{equation}  
for any $0 < \delta < \overline{\lambda}$.
 
Arguing very similarly in comparing the functions $u$ and $Lv_2$ in the annulus $B_\rho(z, R) \setminus \overline{B_\rho(z, R/2)}$ with $z \in \partial \Omega_2$, we get, using the first inequality in \eqref{strong-applied} and the definition of $v_2$, that
\begin{equation}
\label{barrier-applied2}
\frac{u(\delta_{-\lambda}(z))}{\lambda} \geq \frac{L v_2(\delta_{-\lambda} (z))}{\lambda} \geq \frac{1}{2} L C K,
\end{equation}
again for any $0 < \lambda < \overline{\lambda}$.

Consider then the function $u_\lambda (w) = u(\delta_\lambda (w))$ on $\Omega_2^{-\lambda} \setminus \overline{\Omega}_1$. Recall that by $\Omega_2^\lambda$ we denote the contraction of $\Omega_2$ through dilations, as defined in \eqref{dilation-subset}. By Lemma \ref{dilation-invariance}, $u_\lambda$ is $p$-harmonic and observe that on $\partial \Omega_1$ we have 
\[
\frac{u_\lambda}{\lambda} \leq \frac{1}{\lambda} - \frac{1}{2}L C K = \frac{u}{\lambda} - \frac{1}{2}L C K
\]
by \eqref{barrier-applied1}, and on $\partial \Omega_2^{-\lambda}$ we have
\[
0 = \frac{u_\lambda}{\lambda} \leq \frac{u}{\lambda} - \frac{1}{2} L C K,
\]
by \eqref{barrier-applied2}.
Thus, applying the Comparison Principle to the $p$-harmonic functions $u_\lambda/\lambda$ and $u/\lambda - LCK/2$ we get, for any $w \in \Omega_2 \setminus\overline{\Omega}_1$, that
\begin{equation}
\label{quasi-fine}
\frac{u(\delta_\lambda (w)) - u(w)}{\lambda} \leq  - \frac{1}{2}{L C K}
\end{equation}
for any $0 < \lambda < \overline{\lambda}$.
Assume now that $\nabla u$ exists at $w$. Then, we have
\begin{equation}
\label{limit-lambda}
\lim_{\lambda \to 0^+} \frac{u(\delta_\lambda (w)) - u(w)}{\lambda} = \left \langle \nabla u (w), {\frac{d}{d \lambda}\delta_\lambda (w)_{\big\vert \lambda = 0} } \right\rangle = \left\langle{\nabla u, Z}\right\rangle(w),
\end{equation}
where in the last step we used \eqref{flow-z}.
We thus conclude, coupling \eqref{quasi-fine} with \eqref{limit-lambda}, that
\begin{equation}
\label{superquasi-fine}
\left\langle{\nabla u, Z}\right\rangle(w) < - \frac{1}{2}{L C K} < 0
\end{equation}
at $w \in \Omega_2 \setminus \overline{\Omega_1}$.
Observe that the upper bound in \eqref{superquasi-fine} does not depend on the particular point $w$ where  $\nabla u$ exists, and thus it completes the proof of Theorem \ref{thm-prestarshaped}.
\end{proof}
Let us finally briefly show how Theorem \ref{th-starshaped-bounded} follows as a corollary.

\begin{proof}[Proof of Theorem \ref{th-starshaped-bounded}]
The uniform negative upper bound for $\left \langle \nabla u, Z \right \rangle$ holding true at any point of standard diffentiability for $u$ implies that if in addition such function is $C^1$, we have 
\begin{equation}
\label{uniform-starshaped}
\left \langle \nabla u, Z \right \rangle < 0
\end{equation}
on the whole $\Omega_2 \setminus \overline{\Omega}_1$.
In particular, $\nabla u$ never vanishes in the open annulus, and thus the sets $\{u \geq t\} \cup \overline{\Omega}_1$ for $t \in (0, 1)$ are bounded by the $C^1$ submanifolds $\{u = t\}$ with exterior pointing unit normal at any $w_t \in \{u = t\}$  given by $\nu_t = -\nabla u/ \abs{\nabla u}$ computed at such point. This information, plugged in \eqref{uniform-starshaped}, yields
\begin{equation}
\label{fine}
\left\langle \nu_t, Z\right\rangle > 0,
\end{equation}
that is, according to Definition \ref{starshaped}, the sets $\{u \geq t\} \cup \overline{\Omega}_1$ are strictly starshaped for any $t \in (0, 1)$.  

\end{proof}

\medskip

\subsection*{Acknowledgements}\emph{The authors are grateful to C.~Bianchini, G.~Ciraolo, F.~Dragoni and D.~Ricciotti for useful discussions during the preparation of the manuscript. The authors would like to thank the reviewer for his/her detailed comments that helped us to improve the manuscript.
The authors are members of Gruppo Nazionale per l'Analisi Matematica, 
la Probabilit\`a e le loro Applicazioni (GNAMPA), which is part of the 
Istituto Nazionale di Alta Matematica (INdAM),
and they are partially funded by the GNAMPA project 
``Aspetti geometrici in teoria del potenziale 
lineare e nonlineare''.}



\end{document}